 \newtheorem{theorem}{Theorem}[section]
\newtheorem{definition}[theorem]{Definition}
\newtheorem{proposition}[theorem]{Proposition}
\newtheorem*{FrobeniusTheorem}{Diagonal Splitting Theorem}
\theoremstyle{definition}
\newtheorem{remark}[theorem]{Remark}
\newtheorem{example}[theorem]{Example}
\def\Z{\ensuremath{\mathbb{Z}}}
\def\F{\ensuremath{\mathbb{F}}}
\def\R{\ensuremath{\mathbb{R}}}
\def\OO{\ensuremath{\mathcal{O}}}
\def\<{\ensuremath{\langle}}
\def\>{\ensuremath{\rangle}}
\DeclareMathOperator{\Hom}{Hom}
\DeclareMathOperator{\Tor}{Tor}
\begin{document}

\title[Lattice polytopes cut out by root systems]{Lattice polytopes cut out by root systems and the Koszul property}

\author[Payne]{Sam Payne}

\thanks{Supported by the Clay Mathematics Institute}

\begin{abstract}
We show that lattice polytopes cut out by root systems of classical type are normal and Koszul, generalizing a well-known result of Bruns, Gubeladze, and Trung in type $A$.  We prove similar results for Cayley sums of collections of polytopes whose Minkowski sums are cut out by root systems.  The proofs are based on a combinatorial characterization of diagonally split toric varieties.
\end{abstract}

\maketitle

\section{Introduction}

Let $\Phi$ be a root system, with root lattice $N$, in the real vector space $N_\R = N \otimes_\Z \R$.  Let $M = \Hom(N,\Z)$ be the dual lattice of the root lattice, and $M_\R = M \otimes_\Z \R$.  Say that a polytope $P$ in $M_\R$ is \emph{cut out} by $\Phi$ if its facet normals are spanned by a subset of the roots, and that $P$ is a \emph{lattice polytope} if its vertices are in $M$. A lattice polytope $P$ is \emph{normal} if every lattice point in $mP$ is the sum of $m$ lattice points in $P$, for all positive integers $m$.

Following \cite{BrunsGubeladzeTrung97}, to which we refer for background on polytopal semigroup rings, write $S_P$ for the subsemigroup of $M \times \Z$ generated by lattice points in $P \times \{1\}$, with $\Z[S_P]$ the associated semigroup ring.  Say that $P$ is \emph{Koszul} if there is a linear resolution of $\Z$ as a $\Z[S_P]$-module.

\begin{theorem} \label{main}
Let $\Phi$ be a root system each of whose irreducible summands is of type $A$, $B$, $C$, or $D$, and let $P$ be a lattice polytope cut out by $\Phi$.  Then $P$ is normal and Koszul.
\end{theorem}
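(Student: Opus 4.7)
\section*{Proof proposal}

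The plan is to reduce the theorem to the assertion that the toric variety $X_P$ associated to the polytope $P$ is \emph{diagonally split} in characteristic $p$ for all primes $p$, and then construct, for each classical root system, an explicit splitting datum. The connection to normality and Koszulness comes via the Diagonal Splitting Theorem indicated in the preamble: if $X_P$ is diagonally split, then the section ring $\bigoplus_m H^0(X_P, \OO(mD_P))$, which is precisely $\Z[S_P]$, is normal and presented by quadrics with a linear free resolution over $\Z[S_P]_0 = \Z$ (hence $P$ is normal and Koszul). It will be enough to work over $\F_p$ for each prime $p$ and lift.

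The combinatorial characterization of diagonal splittings I plan to use says that $X_P$ is diagonally split over $\F_p$ for every $p$ provided one can produce, for arbitrarily large integers $q$, a lattice polytope $F_q \subset M_\R$ with the following two properties: (i)~the lattice points $\tfrac{1}{q} M \cap F_q$ form a complete set of representatives for $\tfrac{1}{q} M / M$; and (ii)~every facet normal of $F_q$ lies in the set of facet normals of $P$. Equivalently, $F_q$ must \emph{tile} $M_\R$ under translations by $M$ while being cut out by $\Phi$. Given such $F_q$, a standard Frobenius-splitting argument realizes the diagonal in $X_P \times X_P$ as a direct summand, yielding the conclusions above.

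The main content of the proof is therefore the construction of such a tiling polytope $F_q$ for each irreducible classical root system, and then for arbitrary direct sums (which reduces to a Minkowski/product construction). The natural candidate is a suitably scaled fundamental parallelepiped for the coweight lattice of $\Phi$, intersected or modified so that its facets become normal to roots rather than to fundamental coweights. In type $A_n$, the Bruns--Gubeladze--Trung construction gives such an $F_q$ (essentially the alcove parallelepiped of the affine Weyl group), and I would reinterpret it in this language. For types $B_n$, $C_n$, $D_n$, the fundamental alcove of the affine Weyl group is itself cut out by $\Phi$; the task is to assemble a union of translates of this alcove (or its dilates) into a single lattice polytope whose translates by $M$ tile $M_\R$. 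For $q$ coprime to the index of connection this can be arranged by choosing $F_q$ to be a suitable union of alcoves forming a fundamental domain for $M$ inside $\tfrac{1}{q}M$.

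The hard step, and the one I would expect to occupy most of the work, is the geometric verification that the constructed $F_q$ is convex and cut out by $\Phi$ in each of the $B$, $C$, $D$ cases. Convexity is subtle because a union of alcoves is generally not convex; the key is to exploit the specific combinatorics of the coroots (short versus long) in types $B$ and $C$, and of the two families of ``spinor'' coweights in type $D$, to choose alcoves whose union is a zonotope-like region with root-parallel facets. Once this case analysis is complete, the reducible case follows by taking products of the $F_q$'s from the irreducible summands, and the passage from the toric variety $X_P$ back to the statement about $\Z[S_P]$ is formal via the Diagonal Splitting Theorem.
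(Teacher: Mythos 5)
You have the right top-level strategy---reduce to diagonal splitting and invoke the Diagonal Splitting Theorem---but the combinatorial criterion you propose to verify is not the one that theorem requires, and the step you defer as ``the hard step'' is exactly the missing content. The correct criterion is stated in terms of a \emph{fixed} polytope determined by $P$: letting $v_1,\dots,v_s$ be the primitive facet normals of $P$, one forms $\F_P=\{u\in M_\R \ : \ -1\le\langle u,v_i\rangle\le 1\}$ and asks that the \emph{interior} of $\F_P$ contain a representative of every class in $\frac{1}{q}M/M$ for a single $q\ge 2$. This is not a tiling condition: $\F_P$ need not be a fundamental domain for $M$ (in type $A$ its interior already contains a full unit cube, hence many more than $q^n$ points of $\frac{1}{q}M$), and conversely a polytope $F_q$ that tiles $M_\R$ and is cut out by $\Phi$ is neither required for, nor shown by you to produce, a splitting compatible with the diagonal. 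As stated, your criterion (i)--(ii) is unsupported.

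Second, your plan for types $B$, $C$, $D$---assembling a convex fundamental domain out of alcoves of the affine Weyl group---is not carried out, and there is reason to doubt it can be: already for $B_2$ the alcoves do not have vertices in the dual lattice $M$, so ``a union of alcoves forming a fundamental domain for $M$ inside $\frac{1}{q}M$'' is problematic from the start. The actual verification requires no construction at all: in suitable coordinates $\F_P$ contains the cube $[-\frac{1}{2},\frac{1}{2}]^n$, whose interior contains representatives of every class of $\frac{1}{q}\Z^n/\Z^n$ when $q$ is \emph{odd} (type $B$); and in types $C$ and $D$, where $M=\Z^n+\Z\cdot(\frac{1}{2},\dots,\frac{1}{2})$, one checks directly that for odd $q$ the points of $\frac{1}{q}\Z^n$ with all coordinates of absolute value less than $\frac{1}{2}$ represent every class of $\frac{1}{q}M/M$ and pair with every root with absolute value less than $1$, hence lie in the interior of $\F_P$. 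The restriction to odd $q$ is essential in these types and is harmless because the Diagonal Splitting Theorem needs only one $q\ge2$; your insistence on all primes $p$ is unnecessary. The reducible case is then handled by a product of the polytopes $\F_i$ attached to the irreducible summands, which matches your last paragraph. So the gap is twofold: an unjustified splitting criterion, and the absence of the short fractional-lattice-point computation that replaces your alcove construction.
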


\noindent If $\Phi$ is an irreducible root system of type $A$, then we can choose coordinates identifying the root lattice with $\Z^n$ such that the set of roots is $\{ \pm e_i, \pm (e_j - e_k) \}$, where $e_i$ is the $i$-th standard basis vector.  In this case, the theorem was proved by Bruns, Gubeladze, and Trung using regular unimodular triangulations and Gr\"obner basis techniques \cite[Theorem~2.3.10]{BrunsGubeladzeTrung97}.  Our approach is based on what we call the Diagonal Splitting Theorem, a combinatorial result for lattice polytopes proved using characteristic free analogues of Frobenius splittings of toric varieties \cite{diagonalsplittings}.  See Section~\ref{preliminaries} for details.

\begin{remark}
Lattice polytopes cut out by $A_n$ and their triangulations were studied in unpublished work of Haase and Ziegler in the late 1990s and the work of Bruns, Gubeladze, and Trung.  More recently, Lam and Postnikov have made a detailed study of the triangulations of these polytopes induced by the affine Weyl arrangement \cite{LamPostnikov07}.  In the general setting of Theorem~\ref{main}, the affine Weyl arrangement of $\Phi$ gives a decomposition of $P$ into congruent products of simplices called \emph{alcoves}.  If the root system is irreducible then the alcoves are simplices, but these simplices do not have vertices in the dual lattice in general, even for $B_2$.  However, in some cases the set of vertices of alcoves is a lattice, and in these cases triangulations can be used to show that $P$ is normal with respect to the lattice of alcove vertices.  See  \cite{LamPostnikov08}.  It is not known whether all lattice polytopes cut out by root systems have regular unimodular triangulations.
\end{remark}

Theorem~\ref{main} generalizes to Cayley sums of lattice polytopes cut out by root systems as follows.  Let $P_1, \ldots, P_r$ be lattice polytopes in $M_\R$.  The \emph{Cayley sum} $P_1 * \cdots *P_r$ is the convex hull of $\big( P_1 \times \{e_1\} \big) \cup \cdots \cup \big(P_r \times \{e_r\} \big) $ in $M_\R \times \R^r$.  The Cayley sum is characterized by the property that projection to $\R^r$ maps $P_1 * \cdots * P_r$ onto the standard unimodular $(r-1)$-dimensional simplex, the convex hull of the standard basis vectors, and $P_i$ is the preimage of the $i$-th basis vector.  Recall that the \emph{Minkowski sum} $P_1 + \cdots + P_r$ is the set of all sums $x_1 + \cdots + x_r$, with $x_i \in P_i$.  

\begin{theorem} \label{Cayley}
Let $\Phi$ be a root system each of whose irreducible summands is of type $A$, $B$, $C$, or $D$.  Let $P_1, \ldots, P_r$ be lattice polytopes in $M_\R$ such that $P_1 + \cdots + P_r$ is cut out by $\Phi$.  Then $P_1 * \cdots * P_r$ is normal and Koszul.
\end{theorem}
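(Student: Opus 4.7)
The plan is to apply the Diagonal Splitting Theorem from Section~\ref{preliminaries} to the Cayley sum, modeling the argument on the proof of Theorem~\ref{main}. Since that theorem gives a combinatorial criterion on a lattice polytope that implies both normality and Koszulness, it suffices to verify the criterion for $P_1 * \cdots * P_r$ under the hypothesis that $P_1 + \cdots + P_r$ is cut out by $\Phi$.

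First, I would describe the facets of $P_1 * \cdots * P_r$ explicitly. The facets fall into two types. \emph{Vertex facets}, one for each $i$, are copies of the smaller Cayley sum $*_{j \neq i} P_j$ lying over the $(r-2)$-face of $\Delta^{r-1}$ opposite the $i$-th vertex; their outer normals point in the $\R^r$-direction. \emph{Sloped facets} project surjectively onto $\Delta^{r-1}$ and are cut out by supporting hyperplanes of the form $\langle x, u \rangle = \sum_i t_i h_{P_i}(u)$, where $u$ is necessarily a facet normal of the Minkowski sum, hence $u \in \Phi$. The outer normal of such a sloped facet, viewed in $N \oplus \Z^r$ modulo the all-ones vector, is $(u, -h_{P_1}(u), \ldots, -h_{P_r}(u))$. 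In addition, each partial Minkowski sum $\sum_{i \in \sigma} P_i$ is itself cut out by $\Phi$ (since the normal fan of $P_1 + \cdots + P_r$ refines the normal fans of all partial sums), so Theorem~\ref{main} applies to the auxiliary polytopes appearing in the facet structure.

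Second, I would verify the diagonal splitting criterion for $P_1 * \cdots * P_r$ by combining the splitting sublattice associated to $\Phi$ on the $M$-factor with the standard splitting of the unimodular simplex $\Delta^{r-1}$ on the $\R^r$-factor. The criterion should decompose along the projection to $\Delta^{r-1}$: the fibers of the projection are Minkowski combinations of the $P_i$, all cut out by $\Phi$, so the splitting argument used in the proof of Theorem~\ref{main} applies fiberwise, while the $\Delta^{r-1}$-direction is controlled by the standard diagonal splitting of projective space of dimension $r-1$.

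The main obstacle will be handling the sloped facets, whose outer normals mix the $M_\R$ and $\R^r$ directions. Verifying that the diagonal splitting criterion passes through these mixed normals requires reconciling the support function values $h_{P_i}(u)$, which are arbitrary integers depending on $u \in \Phi$, with the finite arithmetic structure of the splitting sublattice. Once this alignment is established, the proof will run in parallel with that of Theorem~\ref{main}, giving normality and Koszulness of $P_1 * \cdots * P_r$.
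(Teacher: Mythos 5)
Your proposal goes after the wrong criterion, and the step you yourself flag as ``the main obstacle'' is not just unresolved --- it is false. The Diagonal Splitting Theorem as stated in Section~\ref{preliminaries} is already formulated for collections: its hypothesis is that the \emph{Minkowski} sum $P_1 + \cdots + P_r$ is diagonally split, and its conclusion is that the \emph{Cayley} sum is normal and Koszul. So the paper's proof is two lines: $P_1 + \cdots + P_r$ is cut out by $\Phi$, hence diagonally split for $q$ odd by Proposition~\ref{mixed diagonal}, and the Diagonal Splitting Theorem does the rest. You instead set out to verify Definition~\ref{split} for the polytope $P_1 * \cdots * P_r$ itself, which requires its own diagonal splitting polytope $\F_{P_1 * \cdots * P_r}$ to contain representatives of every class of $\frac{1}{q}(M \times \Z^r)/(M \times \Z^r)$ in its interior. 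That is a genuinely different, and much stronger, condition than the one the theorem asks for.

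It also fails in general. As you observe, the sloped facets of the Cayley sum have primitive normals of the form $\bigl(u, -h_{P_1}(u), \ldots, -h_{P_r}(u)\bigr)$ (up to the lineality in the $\R^r$ factor), and the support-function values $h_{P_i}(u)$ are unbounded integers. The constraints $-1 \leq \langle w, v \rangle \leq 1$ for such normals squeeze $\F_{P_1 * \cdots * P_r}$ into a region whose volume does not grow with the $h_{P_i}(u)$, so its interior cannot contain $q^{\,\dim}$ fractional lattice points. Concretely, take $\Phi = A_1$, $P_1 = [0,1]$, $P_2 = [0,5]$: the Cayley sum (after projecting out the lineality) is the trapezoid with vertices $(0,0), (5,0), (0,1), (1,1)$, with a facet normal $(1,4)$, and the region $\{|w_1| < 1,\ |w_2| < 1,\ |w_1 + 4w_2| < 1\}$ has area one and does not contain a full set of representatives of $\frac{1}{q}\Z^2/\Z^2$ for any $q \geq 2$. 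So there is no ``alignment'' of the $h_{P_i}(u)$ with the splitting arithmetic to be established; the fiberwise/Minkowski-combination heuristic cannot be made to prove that the Cayley sum is diagonally split, because it is not. The fix is simply to read the Diagonal Splitting Theorem's hypothesis correctly and check it on the Minkowski sum.
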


\noindent Theorem~\ref{main} is the special case of Theorem~\ref{Cayley} when $r$ is equal to one.  It is not known whether all lattice polytopes satisfying the hypotheses of Theorem~\ref{Cayley} have regular unimodular triangulations, even for type $A$, though triangulations of Cayley sums have been studied intensively and correspond to mixed subdivisions of Minkowski sums; see \cite{Sturmfels94, HuberSturmfels95, HuberRambauSantos00, Santos05} and references therein.

\begin{remark}
Cayley sums of lattice polytopes correspond to projectivized sums of nef line bundles on toric varieties, and have appeared prominently in recent work related to boundedness questions in toric mirror symmetry \cite{BatyrevNill07b, BatyrevNill07, HNP}.  Cayley sums of normal polytopes are not normal in general (see Example~\ref{not normal}), and we do not know of any straightforward way of deducing Theorem~\ref{Cayley} from Theorem~\ref{main}.  
\end{remark}

It is not clear whether lattice polytopes cut out by root systems of exceptional type are always normal.  The techniques used to prove Theorems~\ref{main} and \ref{Cayley} do not work for $F_4$, and lattice polygons cut out by $G_2$ are normal but not Koszul in general.  See Sections~\ref{F} and \ref{G}.

\begin{remark}
Another natural class of polytopes associated to root systems are those with vertices in the root lattice whose edges are parallel to roots.  Recall that the Weyl fan $\Sigma_\Phi$ is the complete fan whose walls are given by the arrangement of hyperplanes perpendicular to the roots of $\Phi$.  So lattice polytopes whose edges are parallel to roots are exactly those whose inner normal fans are refined by the Weyl fan.  In toric geometry, these polytopes correspond to ample line bundles on normalizations of torus orbit closures in complete homogeneous spaces $G/P$.  These toric varieties and their cohomology have been studied extensively \cite{Procesi90, FlaschkaHaine91, Stembridge92, Stembridge94, DolgachevLunts94, Klyachko95, Dabrowski96, CarrellKurth00, HaasThesis, CarrellKuttler03}.  Recently, Howard has shown that in type $A$ all such polytopes are normal \cite{Howard07}.  It is not known whether these polytopes are Koszul, nor whether they have regular unimodular triangulations.
\end{remark}

\noindent {\bf Acknowledgments.}  I am most grateful to W.~Bruns and J.~Gubeladze for helpful comments on an earlier draft of this note, and the referee for an important correction.  I also thank I.~Dolgachev, C.~Haase, B.~Howard, B.~Sturmfels, and L.~Williams for stimulating conversations, M.~Joswig for bringing the work of Lam and Postnikov to my attention, and T.~Lam for sharing a copy of their preprint \cite{LamPostnikov08}.

\section{Preliminaries} \label{preliminaries}

Let $P$ be a lattice polytope.  Then $P$ is normal if and only if the semigroup $S_P$ is saturated in $M \times \Z$.  If the lattice points in $P \times \{1 \}$ generate $M \times \Z$ as a group then $P$ is normal if and only if the semigroup ring $\Z[S_P]$ is integrally closed or, equivalently, $k[S_P]$ is integrally closed for all fields $k$.  The Cayley sum of two normal polytopes need not be normal, as the following example shows.

\begin{example} \label{not normal}
Let $M = \Z^3$, and let $P$ be the convex hull of the lattice points $0$, $(1,0,0)$, $(0,0,1)$, and $(1,2,1)$.  The lattice point $(1,1,1)$ in $2P$ cannot be written as the sum of two lattice points in $P$, so $P$ is not normal.  However, projection to the third factor maps $P$ onto the unit interval.  Therefore, $P$ is the Cayley sum of the preimages in $\R^2$ of zero and one, which are the edges $[0, (1,0)]$ and $[0, (1,2)]$, respectively.  In particular, $P$ is the Cayley sum of two normal polytopes, but is not itself normal.
\end{example}

\noindent We now briefly discuss some characterizations of the Koszul property for lattice polytopes.

Let $S_P$ be partially ordered by setting $x \leq z$ if there is some $y \in S_P$ such that $x + y = z$.  The interval $[x,z]$ is the partially ordered set of all $y$ such that $x \leq y \leq z$.  Recall that a graded $k$-algebra $R$ is \emph{Koszul} if there is a linear resolution of $k$ as an $R$-module. 

\begin{proposition} \label{Koszul conditions}
Let $P$ be a lattice polytope.  Then the following are equivalent.
\begin{enumerate}
\item The polytope $P$ is Koszul.
\item For every $x \in S_P$, the interval $[0,x]$ is Cohen-Macaulay over $\Z$.
\item For every $x \in S_P$ and every field $k$, the interval $[0,x]$ is Cohen-Macaulay over $k$.
\item For every field $k$, the algebra $k[S_P]$ is Koszul.
\end{enumerate}
\end{proposition}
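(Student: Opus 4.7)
The core ingredient is a Laudal--Sletsj\o e type formula identifying the multigraded Tor of the semigroup ring with reduced simplicial homology of order complexes of intervals in $S_P$:
$$\Tor_{i,x}^{R[S_P]}(R,R) \;\cong\; \widetilde{H}_{i-2}(\Delta(0,x);R),$$
valid for any commutative ring $R$ and any $x \in S_P$, where $\Delta(0,x)$ denotes the order complex of the open interval $(0,x)$. Since $R[S_P]$ is graded, generated in degree one, with $R[S_P]_0 = R$, Koszulity over $R$ is equivalent to the vanishing of $\Tor_{i,x}^{R[S_P]}(R,R)$ for $i \neq \deg(x)$. I also use the basic observation that translation by $y$ identifies the semigroup interval $[y,z]$ with $[0, z-y]$, so every interval in $S_P$ has the form $[0,w]$; in particular, the link of a chain $y_1 < \cdots < y_k$ in $\Delta(0,x)$ is the join
$$\Delta(0, y_1) \ast \Delta(0, y_2 - y_1) \ast \cdots \ast \Delta(0, x - y_k)$$
of order complexes of the same shape.

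With these tools in hand the cycle (1) $\Rightarrow$ (2) $\Rightarrow$ (3) $\Rightarrow$ (4) $\Rightarrow$ (1) runs as follows. For (1) $\Rightarrow$ (2), a linear resolution of $\Z$ over $\Z[S_P]$ forces $\Tor_{i,x}^{\Z[S_P]}(\Z,\Z)$ to vanish unless $i = \deg(x)$, and to be $\Z$-free in that case (as a $\Z$-summand of the free abelian diagonal Tor $\Tor_i^{\Z[S_P]}(\Z,\Z)_i$). The formula then yields $\widetilde{H}_j(\Delta(0,x);\Z) = 0$ for $j < \deg(x) - 2$ with torsion-free top-dimensional homology, and the K\"unneth formula for joins---clean in the presence of free top homology---propagates this to every link of $\Delta(0,x)$, giving Cohen--Macaulayness of $[0,x]$ over $\Z$. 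The step (2) $\Rightarrow$ (3) is immediate from universal coefficients. For (3) $\Rightarrow$ (4), Cohen--Macaulayness of $[0,x]$ over $k$ implies in particular $\widetilde{H}_j(\Delta(0,x);k) = 0$ for $j < \deg(x) - 2$, which via the formula yields Koszulity of $k[S_P]$. Finally, for (4) $\Rightarrow$ (1), Koszulity over every field $k$ gives $\widetilde{H}_j(\Delta(0,x);k) = 0$ for $j < \deg(x) - 2$ and all $k$; by universal coefficients and the finite generation of the simplicial homology of finite complexes this upgrades to the same vanishing with $\Z$ coefficients, yielding a linear free resolution of $\Z$ over $\Z[S_P]$.

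The main technical point is to ensure that the Laudal--Sletsj\o e formula really is available over $\Z$ (some sources give it only with field coefficients) and to track the freeness of the diagonal Tor groups carefully, so that the K\"unneth formula for joins is clean enough to promote the homological vanishing from $\Delta(0,x)$ itself to every link. Everything else is a routine application of universal coefficients and the standard characterization of Cohen--Macaulay bounded posets in terms of the homology of their open subintervals.
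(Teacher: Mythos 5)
Your proposal is correct and follows essentially the same route as the paper: the Laudal--Sletsj{\o}e/bar-resolution identification of $\Tor_{i}^{R[S_P]}(R,R)_j$ with reduced homology of open intervals, the Universal Coefficient Theorem to pass between $\Z$ and fields, and the fact that off-diagonal $\Tor$ vanishing together with freeness of the diagonal $\Tor$ yields a linear resolution over $\Z$ (the paper cites Woodcock for this last step). The only differences are organizational (you prove a four-step cycle rather than three pairwise equivalences) and that you spell out the join/link argument for Cohen--Macaulayness over $\Z$, which the paper leaves implicit.
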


\begin{proof}
The equivalence of (2) and (3) follows from the topological characterization of Cohen-Macaulayness and the Universal Coefficient Theorem.  The equivalence of (3) and (4) is well-known, and may be deduced from the characterization of Koszul algebras in terms of $\Tor$ vanishing and the identification of $\Tor^{k[S_P]}(k,k)$ with sums of reduced homology groups of intervals $[0,x]$ \cite{LaudalSletsjoe85}.  We now show that (1) is equivalent to (2).

 $(1) \Rightarrow (2)$.  Suppose $P$ is Koszul, so there is a linear resolution of $\Z$ as a $\Z[S_P]$-module.  After tensoring with $\Z$, all of the differentials vanish, and it follows that $\Tor_i^{\Z[S_P]}(\Z, \Z)_j$ vanishes for $i \neq j$.  Now $\Tor^{\Z[S_P]}(\Z, \Z)$ can also be computed using the bar resolution of $\Z$, as in \cite{PeevaReinerSturmfels98, HerzogReinerWelker98}, to give a natural identification
\[
\Tor_{i}^{\Z[S_P]}(\Z, \Z)_j \ \cong \bigoplus_{x \, \in \, S_P \cap (M \times \{j\})} \widetilde H_{i-2}([0,x], \Z).
\]
It follows that $\widetilde H_{i-2}([0,x], \Z)$ vanishes unless $x$ is in $M \times \{i\}$, and hence $[0,x]$ is Cohen-Macaulay over $\Z$.

$(2) \Rightarrow (1)$.  Suppose $[0,x]$ is Cohen-Macaulay over $\Z$ for all $x$ in $S_P$.  Then the identification above shows that $\Tor_{i}^{\Z[S_P]}(\Z, \Z)_j$ vanishes for $i \neq j$.  By \cite[Theorem~2.3]{Woodcock98}, it follows that the Koszul complex
\[
\cdots \longrightarrow \Z[S_P] \otimes_\Z \Tor_{2}^\Z[S_P](\Z, \Z)_2 \longrightarrow  Z[S_P] \otimes_\Z \Tor_{1}^\Z[S_P](\Z, \Z)_1 \longrightarrow \Z[S_P] \rightarrow \Z
\]
is a linear resolution of $\Z$, and hence $P$ is Koszul.
\end{proof}

\begin{remark} 
In \cite{BrunsGubeladzeTrung97}, $P$ is defined to be Koszul if $k[S_P]$ is Koszul for every field $k$.  Here we work over the integers and define $P$ to be Koszul if there is a linear resolution of $\Z$ as a $\Z[S_P]$-module, because the existence of such a resolution is what follows most naturally from the diagonal splitting arguments in \cite{diagonalsplittings}.  By Proposition~\ref{Koszul conditions}, the two definitions are equivalent.
\end{remark}

\begin{remark}  
Let $F$ be a face of $P$.  Whenever a lattice point in $mF$ is written as the sum of $m$ lattice points in $P$, each of the summands is in the face $F$.  Therefore, any face of a normal polytope is normal.  Similarly, if $x$ is a point in $S_F$, then the interval $[0,x]$ in $S_F$ is the same as the interval $[0,x]$ in $S_P$, so any face of a Koszul polytope is Koszul \cite[Proposition~1.3]{OHH}.  In particular, it follows from Theorem~\ref{main} that any face of a lattice polytope cut out by a root system of classical type is normal and Koszul.
\end{remark}

In general, normal polytopes are not necessarily Koszul, and Koszul polytopes need not be normal, as the following examples show.

\begin{example} \label{not Koszul}
Let $M$ be the quotient of $\Z^3$ by the diagonal sublattice $\Z$, and let $P$ be the convex hull of the images of the three standard basis vectors, which contains exactly four lattice points, the three vertices plus the origin at its barycenter, as shown.
\begin{center}
\includegraphics{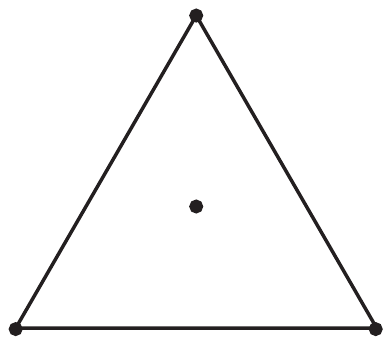}
\end{center}
Then $P$ is normal, but $\Z[S_P]$ is isomorphic to $\Z[x,y,z,w]/(xyz-w^3)$, which is not Koszul, since its ideal of relations is not generated in degree two. 
\end{example}

\begin{example}
Let $P$ be the polytope considered in Example~\ref{not normal}.  The semigroup ring $\Z[S_P]$ is isomorphic to a polynomial ring in four variables.  In particular, $P$ is Koszul but not normal.
\end{example}

One standard approach to proving that an algebra is Koszul is through Gr\"obner deformations; any homogeneous quotient of a polynomial ring that has a quadratic initial ideal with respect to some term order is Koszul \cite{BrunsHerzogVetter94, EisenbudReevesTotaro94}.  Square free initial ideals for $k[S_P]$ correspond to unimodular triangulations of $P$, and such an initial ideal is quadratic if and only if the corresponding triangulation is \emph{flag}, which means that every minimal nonface is an edge.

\begin{example}  
Let $P$ be the polytope considered in Example~\ref{not Koszul}, and let $k$ be a field.  Since $k[S_P]$ is not Koszul, it has no quadratic initial ideals, square free or not.  There is a unique unimodular triangulation of $P$, given by barycentric subdivision.  In this triangulation, $P$ is a minimal nonface, since each of its proper faces is a face of the triangulation, and the corresponding square free initial ideal for $k[S_P] \cong k[x,y,z,w]/(xyz-w^3)$ is $(xyz)$, which has degree three.
\end{example}

The approach to normality and the Koszul property that we follow here is based on a combinatorial characterization of diagonally split toric varieties.  Let $P$ be a lattice polytope, and let $v_1, \ldots, v_s \in N$ be the primitive generators of the inward normal rays of the facets of $P$.  We define the \emph{diagonal splitting polytope} $\F_P$ to be
\[
\F_P = \{ u \in M_\R \ | \ -1 \leq \<u, v_i\> \leq 1, \mbox{ for } 1 \leq i \leq s \}
\]
Note that $\F_P$ has nonempty interior and depends only on the facet normals of $P$.  The combinatorial type of $\F_P$ may be different from that of $P$.

\begin{definition} \label{split}
Let $q \geq 2$ be an integer.  A lattice polytope $P$ is diagonally split for $q$ if the interior of $\F_P$ contains representatives of every equivalence class in $\frac{1}{q} M / M$.
\end{definition}

\noindent Note that whether a lattice polytope is diagonally split depends only on $\F_P$.  In particular, diagonal splitting of a lattice polytope depends only on its facet normals.  The name ``diagonal splitting polytope" comes from the fact that if $X$ is the toric variety associated to the inner normal fan of $P$ then the diagonal is compatibly split in $X \times X$ with respect to $q$ if and only if the interior of $\F_P$ contains representatives of every equivalence class in $\frac{1}{q}M / M$.

\begin{FrobeniusTheorem}
Let $P_1, \ldots, P_r$ be lattice polytopes such that $P_1 + \cdots + P_r$ is diagonally split for some $q \geq 2$.  Then $P_1 * \cdots * P_r$ is normal and Koszul.
\end{FrobeniusTheorem}

\noindent In particular, if $P$ is diagonally split, then $P$ is normal and Koszul.  The Diagonal Splitting Theorem for lattice polytopes is a consequence of the theory of splittings of toric varieties, as we now explain.

Let $f: X \times X \rightarrow X \times X$ be the unique endomorphism whose restriction to the dense torus is given by $(t_1, t_2) \mapsto (t_1^q, t_2^q)$.  Then $f_* \OO_{X \times X}$ is naturally identified with the sheaf whose value on an invariant affine open $U_\sigma$ is $\Z[\sigma^\vee \cap \frac{1}{q}(M \times M)]$, with its natural module structure over the coordinate ring $\Z[U_\sigma] = \Z[\sigma^\vee \cap (M \times M)]$.  If $u \in \frac{1}{q} M$ is a fractional lattice point in the interior of $\F_P$ then there is a map of $\OO_{X \times X}$-modules $\pi_u : f_* \OO_{X \times X} \rightarrow \OO_{X \times X}$ given by
\[
\pi_u(x^{(u_1, \, u_2)}) = \left \{ \begin{array}{ll} x^{(u_1 - u, u_2 + u)} & \mbox{ if } (u_1 - u, u_2 + u) \mbox{ is in } (M \times M), \\ 0 & \mbox{ otherwise.} \end{array} \right.
\]
If $S$ is a set of representatives for every equivalence class in $\frac{1}{q} M / M$ in the interior of $\F_P$ then
$\pi = \sum_{u \in S} \pi_u$ is a splitting that is compatible with the diagonal, which means that $\pi \circ f^*$ is the identity on $\OO_{X \times X}$, and $\pi (f_* I_\Delta)$ is contained in $I_\Delta$, where $I_\Delta$ is the ideal of the diagonal subvariety.  Standard cohomology vanishing arguments from the theory of Frobenius splittings then show that every ample line bundle on $X$ is very ample and gives a projectively normal embedding \cite{BrionKumar05}.  For further details, and a proof that the homogeneous coordinate ring of each such embedding is Koszul, see \cite{diagonalsplittings}.

Here we apply the Diagonal Splitting Theorem combinatorially, with no further reference to the theory of toric varieties.

\section{Proofs of main results}

\begin{proposition} \label{type A}
Let $P$ be a lattice polytope cut out by the root system $A_n$.  Then $P$ is diagonally split for all $q \geq 2$.
\end{proposition}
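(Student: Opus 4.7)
The plan is to exhibit an explicit system of representatives for $\tfrac{1}{q}M/M$ inside the interior of $\F_P$, using the particular structure of the $A_n$ roots in the coordinates given in the introduction.

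First, recall that in those coordinates $N = M = \Z^n$ paired by the standard dot product, and the roots of $A_n$ are $\{\pm e_i\} \cup \{\pm(e_j-e_k) : j \neq k\}$. Since the facet normals of $P$ lie among these roots, $\F_P$ is defined by a subset of the inequalities $|\langle u, \alpha\rangle| \leq 1$ for $\alpha \in \Phi$. In particular $\F_P$ contains the polytope
\[
\A_n := \{u \in \R^n : |u_i| \leq 1 \text{ for all } i, \text{ and } |u_j - u_k| \leq 1 \text{ for all } j \neq k\}
\]
cut out by all of the root inequalities, so the interior of $\A_n$ is contained in the interior of $\F_P$. It therefore suffices to exhibit, for every class in $\tfrac{1}{q}M/M$, a representative in the interior of $\A_n$.

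The key construction is the obvious one: given $u \in \tfrac{1}{q}M$, I take the representative $u'$ whose coordinates are reduced modulo $1$ to lie in $\bigl\{0, \tfrac{1}{q}, \ldots, \tfrac{q-1}{q}\bigr\}$. Then both $|u'_i|$ and $|u'_j - u'_k|$ are bounded above by $\tfrac{q-1}{q} < 1$, so $u'$ lies in the interior of $\A_n$, and hence in the interior of $\F_P$. This shows $P$ is diagonally split for every $q \geq 2$.

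I do not anticipate any substantive obstacle here; the special feature of type $A$ being exploited is that individual coordinates and pairwise differences of coordinates together account for all roots, so placing each coordinate in the half-open unit interval satisfies every root inequality strictly and uniformly in $q$. The analogous arguments for types $B$, $C$, $D$ in the next results of the paper will presumably require a less trivial choice of representatives, since additional roots such as $\pm(e_j + e_k)$ or $\pm 2e_i$ impose constraints that are not automatic from coordinates in $[0,1)$.
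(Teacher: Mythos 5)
Your proof is correct and follows essentially the same route as the paper, which simply observes that the interior of $\F_P$ contains the half-open unit cube $[0,1)^n$ and hence representatives of every class in $\frac{1}{q}M/M$; your explicit choice of representatives with coordinates in $\{0,\frac{1}{q},\ldots,\frac{q-1}{q}\}$ is exactly the set of $\frac{1}{q}\Z^n$-points of that cube. Nothing further is needed.
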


\begin{proof}
We can choose coordinates identifying the root lattices of $A_n$ with $\Z^n$ so that the roots are given by
\[
A_n = \{ \pm e_i, \pm(e_j - e_k) \}.
\]
Then the dual lattice of the root lattice is also identified with $\Z^n$, and the interior of $\F_P$ contains the half open unit cube $[0,1)^n \subset \R^n,$ which contains representatives of every equivalence class in $\frac{1}{q} M / M$, as required.
\end{proof}

\begin{proposition} \label{type BCD}
Let $P$ be a lattice polytope cut out by the root system $B_n$, $C_n$, or $D_n$.  Then $P$ is diagonally split for all odd $q$.
\end{proposition}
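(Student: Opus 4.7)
The plan is to follow the same template as the type-$A$ case: exhibit an explicit open region contained in the interior of $\F_P$ that contains a representative of every class in $\frac{1}{q}M/M$. Since $\F_P$ only shrinks when additional facet directions appear, it suffices to treat the worst case in which every root, rescaled to its primitive lattice vector, occurs as a facet normal of $P$.

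First I identify the primitive facet normals and the dual lattice in each type. For $B_n$ the root lattice is $N = \Z^n$, the dual is $M = \Z^n$, and every root $\pm e_i$, $\pm e_i \pm e_j$ is already primitive. For $C_n$ the root lattice is $N = \{v \in \Z^n : \sum v_i \text{ even}\}$; since $e_i \notin N$, the long root $2 e_i$ is itself primitive in $N$, while the short roots $\pm e_i \pm e_j$ are primitive. For $D_n$ the root lattice coincides with this same $N$, and the roots $\pm e_i \pm e_j$ are primitive. In both the $C_n$ and $D_n$ cases the dual lattice is $M = \Z^n + \Z \omega$ with $\omega = \frac{1}{2}(e_1+\cdots+e_n)$. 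The resulting worst-case defining inequalities for $\F_P$ are: in type $B$, $|u_i| \leq 1$ and $|u_i \pm u_j| \leq 1$; in type $C$, $|u_i| \leq \frac{1}{2}$ and $|u_i \pm u_j| \leq 1$; in type $D$, $|u_i \pm u_j| \leq 1$, equivalently $|u_i|+|u_j|\leq 1$ for all $i \neq j$.

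The decisive use of $q$ odd is the next step: every class in $\frac{1}{q}M/M$ has a representative in $\frac{1}{q}\Z^n$. In type $B$ this is vacuous. For types $C$ and $D$, given $x = \frac{1}{q}(v + k\omega)$ with $v \in \Z^n$ and $k \in \Z$, if $k$ is odd then $k - q$ is even (since $q$ is odd), so $(k-q)\omega = \frac{k-q}{2}(e_1+\cdots+e_n) \in \Z^n$, and therefore $x - \omega = \frac{1}{q}(v + (k-q)\omega) \in \frac{1}{q}\Z^n$; since $\omega \in M$, this provides the desired representative. The parity of $q$ is essential here, and the reduction visibly breaks when $q$ is even.

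It then remains to pick, for each class, the centered representative in $\frac{1}{q}\Z^n$ with every coordinate in $\frac{1}{q}\{-\frac{q-1}{2},\ldots,\frac{q-1}{2}\}$, so that $|u_i| \leq \frac{q-1}{2q} < \frac{1}{2}$ and $|u_i \pm u_j| \leq |u_i|+|u_j| \leq \frac{q-1}{q} < 1$. Each of the worst-case inequalities listed above then holds strictly, placing the representative in the interior of $\F_P$. The main delicate point, and the only real obstacle, is correctly identifying the primitive facet normals in type $C$: because $e_i$ is not in the $C_n$ root lattice, the long root $2e_i$ is primitive in $N$, producing the tighter bound $|u_i| \leq \frac{1}{2}$ and forcing the use of a centered fundamental domain for $\frac{1}{q}\Z^n/\Z^n$ rather than the half-open one used in the type-$A$ proof.
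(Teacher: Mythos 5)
Your proof is correct and follows essentially the same route as the paper: identify the primitive facet normals in each type, use the oddness of $q$ to reduce every class of $\frac{1}{q}M/M$ to a representative in $\frac{1}{q}\Z^n$ with all coordinates of absolute value less than $\frac{1}{2}$, and check that such points lie in the interior of $\F_P$. The only cosmetic difference is that the paper handles $D_n$ by observing its roots are a subset of those of $C_n$, while you verify its (weaker) inequalities directly.
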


\begin{proof}
Suppose $P$ is cut out by $B_n$.  We can choose coordinates identifying the root lattice of $B_n$ with $\Z^n$ so that the roots are given by
\[
B_n = \{ \pm e_i, \pm e_j \pm e_k \}.
\]
Then the dual lattice of the root lattice is also identified with $\Z^n$, and $\F_P$ contains the cube $[-\frac{1}{2}, \frac{1}{2}]^n$.  If $q$ is odd, then there are points in the interior of this cube representing every equivalence class in $\frac{1}{q}\Z^n/ \Z^n$, and hence $P$ is split.

Now, we can choose coordinates identifying the root lattices of $C_n$ and $D_n$ with the index two sublattice of $\Z^n$ consisting of points $(a_1, \ldots, a_n)$ such that $a_1 + \cdots + a_n$ is even, so that the root systems are given by
\[
C_n = \{ \pm 2e_i, \pm e_j \pm e_k \} \mbox{ \ \ and \ \ } D_n = \{ \pm e_i \pm e_j \}.
\]
In these coordinates, the roots of $D_n$ are a subset of those of $C_n$, so it will suffice to consider polytopes cut out by $C_n$.

Suppose $P$ is cut out by $C_n$.  In our chosen coordinates, the dual lattice of the root lattice is
\[
M = \Z^n + \Z \cdot (\textstyle{ \frac{1}{2}, \ldots, \frac{1}{2}} ).
\]
We claim that, if $q$ is odd, the points in $\frac{1}{q}\Z^n$ with coordinates of absolute value less than one half represent every equivalence class in $\frac{1}{q} M / M$.  To see the claim, note that if $u \in \frac{1}{q}M$ does not lie in $\frac{1}{q} \Z^n$, then $u$ is equivalent to $u + (\frac{1}{2}, \ldots, \frac{1}{2})$, which does lie in $\frac{1}{q}\Z^n$.  And any element of $\frac{1}{q}\Z^n$ is equivalent to one whose coordinates have absolute value less than one half, which proves the claim.  Now, any fractional lattice point in $\frac{1}{q}\Z^n$ whose coordinates have absolute value less than one half has inner product less than one with each root in $C_n$, and hence lies in the interior of  $\F_P$, so $P$ is diagonally split, as required.\end{proof}

\begin{proposition} \label{mixed diagonal}
Let $P$ be a polytope cut out by a root system $\Phi$ each of whose irreducible summands is of type $A$, $B$, $C$, or $D$.  Then $P$ is diagonally split for $q$ odd.
\end{proposition}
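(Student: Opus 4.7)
My plan is to reduce to the irreducible cases already proved in Propositions~\ref{type A} and~\ref{type BCD}, using the fact that both $\F_P$ and $\tfrac{1}{q}M/M$ decompose as products indexed by the irreducible summands of $\Phi$.

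First I would write $\Phi = \Phi_1 \sqcup \cdots \sqcup \Phi_\ell$ as the decomposition into irreducible summands. This gives orthogonal decompositions $N = N_1 \oplus \cdots \oplus N_\ell$ of the root lattice and $M = M_1 \oplus \cdots \oplus M_\ell$ of its dual, and in particular
\[
\tfrac{1}{q} M / M \ = \ \bigoplus_{i=1}^\ell \tfrac{1}{q} M_i / M_i.
\]
The next step is to observe that every root of $\Phi$ lies in exactly one summand $N_i$, so every primitive facet normal of $P$, being a scalar multiple of a root, also lies in exactly one $N_i$. Since the inequality $-1 \leq \<u, v\> \leq 1$ for $v \in N_i$ depends only on the projection of $u$ to $(M_i)_\R$, the diagonal splitting polytope factors as
\[
\F_P \ = \ \F_P^{(1)} \times \cdots \times \F_P^{(\ell)},
\]
where $\F_P^{(i)} \subset (M_i)_\R$ is defined by the inequalities from the facet normals of $P$ lying in $N_i$, and equals all of $(M_i)_\R$ if no such normal exists.

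Next I would apply Propositions~\ref{type A} and~\ref{type BCD} summand by summand. Since imposing fewer constraints only enlarges the region, each $\F_P^{(i)}$ contains the explicit set exhibited in the proof of the appropriate one of those two propositions, which already contains representatives for every class in $\tfrac{1}{q} M_i / M_i$ whenever $q$ is odd. Hence $\interior(\F_P^{(i)})$ does as well.

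To conclude, I would take componentwise products of representatives from each $\interior(\F_P^{(i)})$ to obtain a set of representatives for $\tfrac{1}{q}M/M$ inside $\interior(\F_P) = \prod_i \interior(\F_P^{(i)})$, which is the required condition. The only substantive step is the clean splitting of facet normals across the summands $N_i$, which rests on the fact that each root of a reducible root system is supported on a single irreducible component; I do not anticipate any real obstacle beyond this bookkeeping.
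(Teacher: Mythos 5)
Your proposal is correct and follows essentially the same route as the paper: decompose $\Phi$ into irreducible summands, observe that $\F_P$ contains (in fact equals) the product of the polytopes cut out by the facet normals lying in each summand, and apply Propositions~\ref{type A} and \ref{type BCD} factor by factor. Your explicit remark that fewer constraints only enlarge each factor, so the representative sets from the irreducible cases still fit inside, is the same point the paper handles implicitly.
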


\begin{proof}
Let $\Phi_1, \ldots, \Phi_s$ be the irreducible summands of $\Phi$.  Let $N_i$ be the root lattice of $\Phi_i$, with $M_i$ the dual lattice.  Let $\F_i \subset M_i$ be the polytope cut out by the inequalties
\[
-1 \leq \<u,v\> \leq 1
\]
for each primitive generator $v$ of a facet normal of $P$ that is contained in $\Phi_i$.  By Propositions~\ref{type A} and \ref{type BCD}, the interior of $\F_i$ contains representatives of $\frac{1}{q}M_i / M_i$.  Furthermore, $\F_P$ contains the product $\F_1 \times \cdots \times \F_s$, by construction, and hence the interior of $\F_P$ contains the product of their interiors.  It follows that the interior of $\F_P$ contains representatives of every equivalence class in $\frac{1}{q}M / M$, as required.
\end{proof}

\noindent  We now apply Proposition~\ref{mixed diagonal} to prove our main result.

\begin{proof}[Proof of Theorem~\ref{Cayley}]
Let $\Phi$ be a root system each of whose irreducible summands is of type $A$, $B$, $C$, or $D$.  Suppose $P_1, \ldots, P_r$ are lattice polytopes in $M$ such that the Minkowski sum $P_1 + \cdots + P_r$ is cut out by $\Phi$.  Then $P_1 + \cdots + P_r$ is diagonally split for $q$ odd, by Proposition~\ref{mixed diagonal}.  Therefore $P_1 * \cdots *P_r$ is normal and Koszul, by the Diagonal Splitting Theorem.
\end{proof}

\section{Type $F$} \label{F}

The methods used in types $A$, $B$, $C$, and $D$ do not generalize in the most obvious possible way to type $F$.

\begin{proposition} \label{type F}
Let $P$ be a lattice polytope cut out by $F_4$ whose facet normals are exactly the rays spanned by the roots.  Then $P$ is not diagonally split.
\end{proposition}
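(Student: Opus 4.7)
The plan is to prove, for each integer $q \ge 2$, that some class in $\tfrac{1}{q}M/M$ has no representative in $\interior(\F_P)$. In standard coordinates the roots of $F_4$ are $\pm e_i$, $\pm e_i \pm e_j$, and $\tfrac{1}{2}(\pm e_1 \pm e_2 \pm e_3 \pm e_4)$. The root lattice is $N = \Z^4 + \Z \cdot \tfrac{1}{2}(e_1 + e_2 + e_3 + e_4)$, and its dual is the $D_4$ lattice $M = \{u \in \Z^4 : u_1 + u_2 + u_3 + u_4 \in 2\Z\}$. Using $2u_i = (u_i + u_j) + (u_i - u_j)$ and $\pm u_1 \pm u_2 \pm u_3 \pm u_4 = (\pm u_1 \pm u_2) + (\pm u_3 \pm u_4)$, the short-root inequalities are implied by the long-root ones, so $\F_P = \{u \in \R^4 : |u_i \pm u_j| \le 1, \ i \ne j\}$, which is the $24$-cell with vertices at the $24$ short $F_4$-roots.

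The bad class splits on the parity of $q$. For $q$ even I would take $u^*_q = (\tfrac{1}{2}, \tfrac{1}{2}, 0, 0) \in \tfrac{1}{q}M$. For any $m \in D_4$, the inequality $|u_1 + u_2| < 1$ applied to $u = u^*_q + m$ forces $m_1 + m_2 = -1$, while $|u_1 - u_2| < 1$ forces $m_1 = m_2$, which has no integer solution. For $q$ odd I would take $u^*_q = (\tfrac{q-1}{q}, \tfrac{1}{q}, \tfrac{1}{q}, 0)$; then $q u^*_q = (q-1, 1, 1, 0)$ has even sum $q+1$, so $u^*_q \in \tfrac{1}{q}M$. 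A chain of long-root inequalities on $u = u^*_q + m$---first $|u_1 + u_2| < 1$ gives $m_1 + m_2 = -1$ and $|u_1 - u_2| < 1$ constrains $m_1 - m_2 \in \{-1, 0\}$, so integrality yields $(m_1, m_2) = (-1, 0)$; then $|u_1 \pm u_3| < 1$ pins $m_3 = 0$; then $|u_3 \pm u_4| < 1$ pins $m_4 = 0$---leaves $\sum m_i = -1$, which is odd and hence forbidden for $m \in D_4$.

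The main obstacle is the odd $q$ case. The $q$-even case collapses after two inequalities via a direct parity argument on $m_1 + m_2$, but for odd $q$ one must propagate constraints through several facets in the correct order before the $D_4$ parity of $\sum m_i$ delivers the contradiction. Finding a representative for odd $q$ whose forced translates all miss $\interior(\F_P)$ by exactly this global parity is the only genuinely non-mechanical step; once $u^*_q$ is in hand, verifying that it lies in $\F_P$ (it sits on the $2$-face $\{u_1 + u_2 = u_1 + u_3 = 1\}$) with the remaining facet inequalities holding with strict slack is routine.
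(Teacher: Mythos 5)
Your proof is correct, but it takes a genuinely different route from the paper's. You exhibit, for each $q \geq 2$, an explicit class in $\frac{1}{q}M/M$ --- the class of $(\frac12,\frac12,0,0)$ for $q$ even and of $(\frac{q-1}{q},\frac1q,\frac1q,0)$ for $q$ odd --- none of whose translates by $M = D_4$ satisfies all the strict long-root inequalities $|u_i \pm u_j| < 1$; since the long roots are among the facet normals, the interior of $\F_P$ is contained in the region they cut out, so no representative of that class lies in $\interior \F_P$. The membership checks ($q u^*_q \in M$), the reduction of $\F_P$ to the $24$-cell, and the constraint-propagation in both parity cases all verify. The paper argues instead by counting: it computes the Ehrhart quasipolynomial of $\F_P$ with respect to $M$ (period two, pinned down by $f(0),\dots,f(4)$ together with the reciprocity relation $f(-q) = f(q-1)$) and concludes that $\interior \F_P$ contains strictly fewer than $q^4$ points of $\frac{1}{q}M$, so by pigeonhole some class is unrepresented. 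Your argument is more elementary and self-contained --- no Ehrhart reciprocity and no lattice-point counts in dilates of the $24$-cell are needed --- and it produces explicit witnesses for the failure of splitting; the paper's computation buys the exact number of interior fractional points, quantifying how far $\F_P$ falls short of containing a full set of representatives. Both establish the proposition.
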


\begin{proof}
We can choose coordinates identifying $N_\R$ with $\R^4$ such that the root lattice is
\[
N = \Z^4 + \Z \cdot (\textstyle{ \frac{1}{2}, \frac{1}{2}, \frac{1}{2}, \frac{1}{2} }),
\]
and the set of roots is $F_4 = \{ \pm e_i, \pm e_i \pm e_j, (\pm \frac{1}{2}, \pm \frac{1}{2}, \pm \frac{1}{2}, \pm \frac{1}{2}) \}$.  Then the dual lattice $M$ is the sublattice of $\Z^4$ consisting of points $(a_1, a_2, a_3, a_4)$ such that $a_1 + a_2 + a_3 + a_4$ is even.  We will show that the diagonal splitting polytope 
\[
\F = \{ u \in M_\R \ | \ -1 \leq \<u, v\> \leq 1 \mbox{ for all } v \in F_4 \}
\]
is too small in the sense that, for any $q \geq 2$, the interior of $\F$ contains strictly less than $q^4$ points in $\frac{1}{q}M$.  Therefore, the interior of $\F$ does not contain representatives for $\frac{1}{q}M/ M$.  The polytope $\F$ has rational vertices, and hence the number of fractional lattice points in $\F \cap \frac{1}{q}M$ is a quasipolynomial in $q$, called the \emph{Ehrhart quasipolynomial} of $\F$.  See \cite{BeckRobins07} for details and standard facts about Ehrhart quasipolynomials.

The set of vertices of the diagonal splitting polytope $\F$ is  $\{ \pm e_i, (\pm \frac{1}{2}, \pm \frac{1}{2}, \pm \frac{1}{2}, \pm \frac{1}{2}) \}$.  In particular, $2\F$ has vertices in $M$, so the period of the Ehrhart quasipolynomial of $\F$ is at most two.  In other words, if we set
\[
f(q) = \# \{ \F \cap {\textstyle \frac{1}{q}} M \},
\]
for positive integers $q$, then there are polynomials $f_0$ and $f_1$ such that $f(q)$ is equal to $f_0(q)$ if $q$ is even and $f_1(q)$ if $q$ is odd, for all positive integers $q$.  We then extend $f$ to negative integers by quasipolynomiality, and Ehrhart reciprocity says that $f(-q)$ is the number of points in $\frac{1}{q}M$  that lie in the interior of $\F$.  Dilating by a factor of $q$, we see that $f(q)$ is the number of lattice points in $q\F \cap M$ and, since $\F$ is cut out by inequalities of the form $\<u,v\> \leq 1$, the lattice points in the interior of $q\F$ are exactly the lattice points in $(q-1)\F$.  Therefore, we have
\[
f(-q) = f(q-1),
\]
for all integers $q$.  Since the degrees of $f_0$ and $f_1$ are both four, these polynomials are determined by the reciprocity formula above and the values $f(q)$ for $0 \leq q \leq 4$.  By counting lattice points, we find that
\begin{eqnarray*}
f(0) & = & 1. \\
f(1) & = & 1.  \\
f(2) & = & 49. \\
f(3) & = & 145. \\
f(4) & = & 433.
\end{eqnarray*}
It follows that the Ehrhart quasipolynomial of $\F$ is given by
\[
f(q) = \left\{ \begin{array}{ll} q^4 + 2q^3 + 2q^2 + 4q + 1 & \mbox{ for $q$ even}. \\
                                                 q^4 + 2q^3 + 2q^2 -2q -2 & \mbox{ for $q$ odd}.
                                                 \end{array} \right.
\]
In particular, we find that $f(-q)$ is strictly less than $q^4$ for all integers $q \geq 2$, and hence the interior of $\F$ cannot contain representatives of every equivalence class in $\frac{1}{q}M/ M$.
\end{proof}

\section{Type $G$} \label{G}

The analogue of Theorem~\ref{main} does not hold in type $G$; every lattice polygon is normal, but there is a lattice polygon cut out by $G_2$ that is not Koszul.  We can choose coordinates identifying the root lattice of $G_2$ with the rank two sublattice of $\Z^3$ consisting of points $(a_1, a_2, a_3)$ such that $a_1 + a_2 + a_3$ is zero and the set of roots is
\[
G_2 = \{ (e_i - e_j), \pm(e_i + e_j - 2e_k) \}.
\]
Then the dual lattice of the root lattice is the quotient of $\Z^3$ by the diagonal sublattice $\Z$.  Let $P$ be the polytope cut out by the inequalities $\<u,v\> \leq 1$ for roots $v$ of the form $e_i + e_j - 2 e_k$.  Then $P$ is the polytope considered in Example~\ref{not Koszul}, the convex hull of the images of the standard basis vectors in $\Z^3$.  In particular, $P$ is not Koszul.

\bibliography{math}
\bibliographystyle{amsalpha}

\end{document}